\numberwithin{equation}{section}
\title{\Large Transition pathways for a class of degenerate stochastic dynamical systems with L\'evy noise}
\date{}
\author[$\dag$]{\small Ying Chao}
\author[$\ddag$]{{Pingyuan Wei}\thanks{Corresponding author\par \textit{\;\;Email:} yingchao1993@xjtu.edu.cn (Ying Chao),  weipingyuan@pku.edu.cn (Pingyuan Wei)}}
\affil[$\dag$]{\footnotesize School of Mathematics and Statistics, Xi'an Jiaotong University, Xi'an, Shaanxi 710049, China.}
\affil[$\ddag$]{Beijing International Center for Mathematical Research, Peking University, Beijing 100871, China.}
\newtheorem{thm}{Theorem}[section]
\newtheorem{theorem}{Theorem}[section]
\newtheorem{proposition}[thm]{Proposition}
\theoremstyle{remark}
\newtheorem{remark}[thm]{Remark}
\theoremstyle{definition}
\newtheorem{definition}[thm]{Definition}
\begin{document}
\maketitle

\begin{abstract}
This work is devoted to deriving the Onsager--Machlup function for a class of  degenerate stochastic dynamical systems with (non-Gaussian) L\'{e}vy noise  as well as Brownian noise. This is obtained based on the Girsanov transformation and then by a path representation. Moreover, this Onsager--Machlup function may be regarded as  a Lagrangian giving the most probable transition pathways. The Hamilton--Pontryagin principle is essential to handle such a variational problem in degenerate case. Finally, a kinetic Langevin system in which noise is degenerate is specifically investigated analytically and numerically.\\

\noindent\textit{Keywords:} Transition pathways; Onsager--Machlup action functional; degenerate stochastic differential equations; non-Gaussian L\'evy noise

\end{abstract}

%\tableofcontents

\renewcommand{\theequation}{\thesection.\arabic{equation}}
\setcounter{equation}{0}

\section{Introduction}

Environmental noisy fluctuations are inevitable in dynamical systems and may lead  to the transition phenomena between distinct metastable states. Examples of such noise-induced metastable transitions include climate changes \cite{Ditlevsen1999observation}, conformation switching of macromolecules \cite{Weinan2002string}, disease eradication \cite{Schwartz2015noise} and gene transcriptions \cite{Zheng2016transitions}. It is indeed a challenging task to explore the mechanism of transition behaviors in stochastic dynamical systems. Significantly, the Onsager--Machlup (OM) action functional provides an essential tool for assessing the likelihoods of those transitions, and predicting the most probable transition pathway (MPTP) through which the transition occurs \cite{Brocker2019correct}. 

% For many irreversible systems, the loss of detailed balance gives rise to the difficulties of analyzing the asymptotic behavior and transition phenomena.  
OM action functional was first initiated by Onsager and Machlup \cite{Onsager1953} as the probability density functional for a diffusion process with linear drift and constant diffusion coefficient. For stochastic differential equations with (Gaussian) Brownian noise, the OM action functional has been widely investigated during the past few decades, see, e.g., \cite{Stratonovich1971probability, Fujita1982onsager, Ikeda2014stochastic, Shepp1992note, Capitaine1995onsager, Moret2002onsager}. The key point is to express the transition probability of a diffusion process in terms of a functional integral over paths of the process, and the integrand is called OM function.  Then regarding OM function as a Lagrangian \cite{Durr1978}, the most probable transition path of a diffusion process is determined by a variational principle. 

However, certain complex phenomena are not suitable to be modeled as stochastic differential equations with Brownian noise, due to peculiar dynamical features such as heavy-tailed distributions and burst-like events (especially in climate changes \cite{Ditlevsen1999observation}, stock market crashes \cite{Bianchi2010tempered} or tumor metastasis \cite{Hao2014tumor}).  A stochastic process with discontinuous trajectories, e.g., the (non-Gaussian) L\'{e}vy process, appears more appropriate for these phenomena \cite{Applebaum2009, Duan2015introduction}. The related study of OM function for stochastic dynamical systems with L\'evy noise is still under development because of the complicated nonlocal term in the statistical distribution of the noisy fluctuations, but some interesting works are emerging. 
Bardina et al. \cite{Bardina2002asymptotic} dealt with jump functions directly rather than using the Girsanov theorem to obtain the asymptotic evaluation of the Poisson measure for a tube in the path space. Chao and Duan \cite{Chao2019} used the Girsanov transformation to absorb the drift term and thus derived the OM function in one-dimensional nonlinear systems with Brownian noise and L\'{e}vy noise. Subsequently, Hu et al. further generalized this result to high dimensional \cite{Hu2021transition} and infinite dimensional cases \cite{Hu2020}. However, these results do not apply to general degenerate stochastic differential equations.

It should be noted that Aihara and Bagchi \cite{Aihara1999mortensen}, Liu and Gao \cite{Liu2024onsager} derived the OM function for the degenerate stochastic differential equations with Brownian noise successively. This is done by converting stochastic integrals to independent random variables. However, this method is invalid for L\'evy noise case due to the nonequivalence of irrelevance and independence. 

In this present paper, we consider a class of two-dimensional degenerate stochastic dynamical systems under random fluctuations consisting of  Brownian noise and L\'evy noise. On the one hand, we modify the technique in the previous work \cite{Chao2019} to overcome the difficulties cased by L\'evy noise and the degenerate form. More precisely, the OM function is derived by combining the Girsanov transformation, a path representation method with some asymptotic analyses. In this way, we can consider the corresponding variational problem for OM action functional. But different from the non-degenerate case, the most probable pathway connecting distinct states only satisfies an ``implicit" equation (i.e., the Hamilton--Pontryagin equation) instead of the classical Euler--Lagrange one.
On the other hand, we emphasize that our result is consistent with the result of the diffusion processes \cite{Aihara1999mortensen,Liu2024onsager} in the absence of  L\'{e}vy noise. The main difference (in form at least) lies in an extra term of the OM function, depicting the impact of  L\'{e}vy noise.

% The OM function is derived by using the Brownian motion to absorb the drift vector field via the Girsanov transformation for probability measures, and then by a path representation.  Compared with the case of degenerate diffusion processes,  an extra term depicting the impact of  L\'{e}vy noise will appear in  the Onsager-Machlup function. We emphasize that our result is consistent with the result of the diffusion processes \cite{Aihara1999mortensen,Liu2024onsager} in the absence of  L\'{e}vy noise. 

% On the other hand, the Onsager–Machlup function could be considered as  a Lagrangian. By a variational principle, the most probable pathway connecting distinct states satisfies Hamilton-Pontryagin equations (or Euler–Lagrange equations in implicit form). Thus, it reduces to a two-point boundary value problem when we capture the most probable pathway. We will illustrate this for a kinetic Langevin system with quadratic potential under L\'evy noise and Brownian noise. The most probable pathway from one state to another over a finite time interval can be found analytically and numerically.

% An inspiration for this paper goes back to the work \cite{Aihara1999mortensen} by Aihara and Bagchi. We generalize and improve their results to  a class of  degenerate dynamical systems with Brownian noise and L\'{e}vy noise. This will give rise to several difficulties both in analytic and probabilistic aspects as the non-Gaussian pure jump L\'evy noise is present. 

This paper is organized as follows. We first review some notations and basic definitions in Section \ref{sec:2}. Our general theory is in Section \ref{sec:3}. We derive the OM function for a class of degenerate stochastic system with L\'{e}vy noise in Section \ref{sec:31}. Subsequently in Section \ref{sec:32}, we introduce the Lagrangian mechanics for investigating the MPTPs between arbitrary states. In  Section \ref{proof-main}, the proof of our main theorem is presented. Numerical experiments of a kinetic Langevin system (whose noise is degenerate) are in Section \ref{sec:4}, and conclusions follow in Section \ref{sec:5}.

\renewcommand{\theequation}{\thesection.\arabic{equation}}
\setcounter{equation}{0}

\section{Preliminaries}\label{sec:2}
We now recall some necessary notations, introduce a class of stochastic differential equations (SDEs) to be studied, and define the Onsager--Machlup (OM) function as well as OM action functional.

\subsection{Degenerate stochastic differential equations with L\'evy noise}
Let $(\Omega, \mathscr{F}, (\mathscr{F}_t)_{t\geq0}, \mathds{P})$ be a complete filtered probability space, where $\mathscr{F}_t$ is a nondecreasing family of sub-$\sigma$-fields of $\mathscr{F}$ satisfying the usual conditions.
We start with the following degenerate SDE defined on $[0,T]$:
\begin{equation}\label{Equation-2}
\left\{
\begin{array}{l}
dX_t=g(X_t,Y_t)dt, \;\;\; X_0=x_0\in \mathbb{R},\\
dY_t=f(X_t,Y_t)dt+cdW_t+dL_t, \;\;\; Y_0=y_0\in \mathbb{R}, \\
\end{array}
\right.
\end{equation}
%\begin{equation} \label{Equation-1}
%  \begin{split}
%  &dX_t=g(X_t,Y_t)dt, \;\;\; X_0=x_0\in \mathbb{R},   \\
%  &dY_t=f(X_t,Y_t)dt+cdB_t+dL_t, \;\;\; Y_0=y_0\in \mathbb{R},
%  \end{split}
%\end{equation}
where $c$ is a positive constant (referred as the noise intensity), $W_t$ is a standard Brownian motion, and ${L_t}$ is a pure jump L\'{e}vy process with characteristics $(0,0,\nu)$ satisfying $\int_{|\xi|<1}\xi\nu(d\xi)<\infty$. By L\'{e}vy--It\^o decomposition, the L\'evy motion ${L_t}$ can be split into two terms that involve small and large jumps respectively, that is,
\begin{equation}\label{decomposition}
{L_t}= \int_{|x|< 1} x \tilde N(t,dx) + \int_{|x|\ge 1} x N(t,dx),
\end{equation}
where $N(dt,dx)$ is the Poisson random measure on $\mathbb{R}^{+}\times({\mathbb{R}}\backslash \{ 0\})$ and $\tilde N(dt,dx) = N(dt,dx) - \nu (dx)dt$ is the corresponding compensated Poisson random measure with $\nu(A)=\mathds{E}N(1,A)$, $A\in\mathcal{B}({\mathbb{R}}\backslash \{ 0\})$ being the jump measure. 
%In spirits of interlacing \cite[ Page 365] {Applebaum2009}, it makes sense to concentrate on the study of the equation driven by continuous noise interspersed with small jumps. 
% To this end, we shall consider the following degenerate SDE:
% \begin{equation}\label{Equation-2}
% \left\{
% \begin{array}{l}
% dX_t=g(X_t,Y_t)dt, \;\;\; X_0=x_0\in \mathbb{R},\\
% dY_t=f(X_t,Y_t)dt+cdW_t+\int_{|x|<1}x\tilde{N}(dt,dx)+\textcolor{blue}{\int_{|x|\ge 1} x N(dt,dx)}, \;\;\; Y_0=y_0\in \mathbb{R}. \\
% \end{array}
% \right.
% \end{equation}
%$Y_t^L=cW_t+\int_0^t\int_{|x|<1}x\tilde{N}(ds,dx)$.

Denote by $C_b^r(\mathbb{R}^2,\mathbb{R})$ the set of all real-valued $r$th-order continuous differentiable bounded functions defined on $\mathbb{R}^2$ and by $L^2([0,T],\mathbb{R}^2)$ the set of all $\mathbb{R}^2$-valued square integrable functions defined on $[0,T]$. Later in Section \ref{sec:3}, we need to assume that $f\in C_b^2(\mathbb{R}^2,\mathbb{R})$ and $g\in C_b^1(\mathbb{R}^2,\mathbb{R})$. There exists a unique solution to \eqref{Equation-2} up to a maximal stopping time (also called the lifetime) $\tau$ and this solution is adapted and c\`{a}dl\`{a}g (i.e., right-continuous with left limit at each time instant, a.s.), referring to \cite{Applebaum2009,Kunita2019}. For simplicity, we assume that $T\leq \tau$ and we also remark that the solution is global in time (i.e., $\tau=\infty$) if the drift functions $f,g$ satisfies appropriate conditions, e.g., the locally Lipschitz $\&$ one sided linear growth ones \cite{Albeverio2010} or the ones such that the SDE forms a standard Langevin system \cite{Song2020,Song2023}.

Furthermore, we set $Z_t=(X_t,Y_t)^T$, $Z_0=z_0=(x_0,y_0)^T$, and denote by $D_{[0,T]}^{z_0}$ the space of solution paths of (\ref{Equation-2}), that is, % 
$$
D_{[0,T]}^{z_0}=\big\{z:[0,T]\rightarrow \mathbb{R}^2\mid z(t) 
\hbox{ is c\`{a}dl\`{a}g}, \;z(0)=z_0\big\}.
$$
Note that every c\`{a}dl\`{a}g function on $[0,T]$ is bounded. It is clear that $D_{[0,T]}^{z_0}$ is a Banach space, if it is equipped with the following uniform norm $\|\cdot\|$:
$$
\|z\|=\sup_{t\in[0,T]}|z(t)|, \;\; \forall z(t)\in D_{[0,T]}^{z_0}.
$$
%For convenience, by $Y^L$ we mean the stochastic process $Y_t^L=cW_t+L_t$. Similarly, we also denote by $D_{[0,T]}^{0}$ the paths space of $Y_t^L$.

\subsection{Onsager--Machlup function}
%Next, we state the definitions of  Onsager-Machlup function and Onsager-Machlup action functional associated with (\ref{Equation-2}).

In this paper, we concern with the problem of finding the most probable tube of $X_t$. It thus makes sense to ask for the probability that paths lie within the closed tube:
\begin{equation}\label{K-tube}
    K(\phi,\varepsilon)=\big\{
    z\in D_{[0,T]}^{z_0}\mid \phi=(\phi_1,\phi_2)^T\in D_{[0,T]}^{z_0},\; \|z-\phi\|\leq \varepsilon, \;\varepsilon>0
    \big\}.
\end{equation}
Let $\mu_Z$ be the measure induced by the solution process $Z_t$. For given $\varepsilon>0$, we can estimate the probability of this tube by
\begin{equation}\label{probability-tube}
\mu_Z(K(\phi,\varepsilon))=\mathds{P}(\{\omega\in \Omega \mid Z_t(\omega)\in K(\phi,\varepsilon) \}).
\end{equation}
Note that the tube $K(\phi,\varepsilon)$ relies closely upon the choice of the function $\phi(t)$, the key to solve our problem is thus to look for an appropriate function maximizing \eqref{probability-tube}. Based on the viewpoint of Onsager and Machlup, we introduce the following definition:
  
\begin{definition}\label{Definition 2.1} (OM function $\&$ OM action functional) Let $\varepsilon>0$ be given. Consider a tube surrounding a reference path $\phi(t)$. If for $\varepsilon$ sufficiently small we estimate the probability of the solution process $Z_t$ lying in this tube in the form :
$$\mathds{P}(\{\|Z-\phi\|\leq\varepsilon\})\propto C(\varepsilon)\exp\left\{-\int_0^T \text{OM}(\phi, \dot{\phi})dt\right\},$$
then integrand $\text{OM}(\phi, \dot{\phi})$ is called \textit{Onsager--Machlup function}. Where $\propto$ denotes the equivalence relation for $\varepsilon$ small enough. And $I(\phi, \dot{\phi}):=\int_0^T \text{OM}(\phi, \dot{\phi})dt$ is called the  \textit{Onsager--Machlup action functional}.\end{definition}

At this point, once the OM action functional $I(\phi, \dot{\phi})$ is known, we can seek for the function $\phi(t)$ by minimizing this functional (i.e., discussing about its variation), and such a minimizer $\phi(t)$ gives a notation of the most probable transition pathway for the stochastic system \eqref{Equation-2}. 

To achieve the aforementioned goals, it is important for us to distinguish the path space (i.e., the regularities of the functions). In this paper, we introduce the Cameron-Martin $\mathcal{H}$ space as following:
$$
\mathcal{H}=\big\{h:[0,T]\rightarrow \mathbb{R}^2\mid h(t) \hbox{ is absolutely continuous,}\;\dot{h}\in L^2([0,T], \mathbb{R}^2),\;h(0)=0 \big\},
$$
and restrict our arguments on the following assumptions: $\phi(0)=z_0$ and $\phi(t)-\phi(0) \in \mathcal{H}$ for all $t\in [0,T]$.

\renewcommand{\theequation}{\thesection.\arabic{equation}}
\setcounter{equation}{0}

\section{General theory}\label{sec:3}

\subsection{Onsager--Machlup theory for the degenerate
SDE with L\'evy noise}\label{sec:31}

We now state our main theorem associated with OM action functional for (\ref{Equation-2}), and we will present its proof later in Section \ref{proof-main}.

\begin{theorem}\label{theorem 3.1} (Onsager--Machlup theory) Consider a class of degenerate stochastic systems in the form of (\ref{Equation-2}) with the jump measure satisfying $\int_{|\xi|<1}\xi \nu(d\xi)<\infty$ and the initial state $z_0=(x_0,y_0)\in\mathbb{R}^2$. Assume that $f\in C_b^2(\mathbb{R}^2,\mathbb{R})$ and $g\in C_b^1(\mathbb{R}^2,\mathbb{R})$. Then the Onsager--Machlup action functional is given by: % (of course, up to an additive constant)
\begin{align}
    I(\phi, \dot{\phi})=\int_0^T \text{OM}(\phi, \dot{\phi})dt
\end{align}
with the Onsager--Machlup function (of course, up to an additive constant)
\begin{align}
    \text{OM}(\phi, \dot{\phi})=\frac{1}{2}\left|\frac{\dot{\phi_2}(t)-f\left(\phi_1(t),\phi_2(t)\right)+\int_{|\xi|<1}\xi \nu(d\xi)}{c}\right|^2+\frac{1}{2}\frac{\partial f}{\partial y}(\phi_1(t),\phi_2(t)),
\end{align}
where $\phi=(\phi_1,\phi_2)$ is a reference function such that $\phi(0)=z_0$, $\phi(t)-\phi(0)\in \mathcal{H}$ and $\dot{\phi}_1(t)=g(\phi_1(t),\phi_2(t))$. 

Furthermore, the measure of the tube $K(\phi,\varepsilon)$ defined as \eqref{K-tube} can be approximated as follows:
\begin{align}
    \mu_Z(K(\phi,\varepsilon))\propto C(\varepsilon)\exp(-I(\phi, \dot{\phi})),
\end{align}
where the symbol $\propto$ denotes the equivalence relation for $\varepsilon$ small enough.%, and $\mu_{Y^L}$ is the measure induced by the process $Y_t^L=cW_t+L_t$.
%$Y_t^L=cW_t+\int_0^t\int_{|x|<1}x\tilde{N}(ds,dx)$.
\end{theorem}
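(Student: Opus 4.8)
The plan is to adapt the Girsanov-transformation and path-representation strategy of \cite{Chao2019} to the present degenerate, two-dimensional setting, the novelty being that only the second coordinate is forced by noise. First I would introduce the measure $\tilde{\mathds{P}}$ defined through the Girsanov density
$$
M_T=\exp\left(-\frac1c\int_0^T f(X_t,Y_t)\,dW_t-\frac{1}{2c^2}\int_0^T f(X_t,Y_t)^2\,dt\right),
$$
under which $\tilde W_t:=W_t+\frac1c\int_0^t f(X_s,Y_s)\,ds$ is a Brownian motion, so that the second equation becomes the driftless process $Y_t=y_0+c\tilde W_t+L_t$. Writing the tube probability as
$$
\mu_Z(K(\phi,\varepsilon))=\tilde{\mathds{E}}\!\left[M_T^{-1}\,\mathbf{1}_{\{\|Z-\phi\|\le\varepsilon\}}\right]
$$
transfers all of the drift dependence into the now explicit density $M_T^{-1}$ and leaves a reference process carrying only the noise.

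Next I would reduce the two-dimensional tube to a one-dimensional one. Since the first equation is noiseless and $g\in C_b^1$ is Lipschitz, while the reference satisfies $\dot\phi_1=g(\phi_1,\phi_2)$ with $\phi_1(0)=x_0$, the identity $X_t-\phi_1(t)=\int_0^t[g(X_s,Y_s)-g(\phi_1(s),\phi_2(s))]\,ds$ together with a Gronwall estimate shows that $\|X-\phi_1\|$ is controlled by $\|Y-\phi_2\|$ up to a constant multiple. Consequently the event $\{\|Z-\phi\|\le\varepsilon\}$ is sandwiched between two events of the form $\{\|Y-\phi_2\|\le c'\varepsilon\}$ with comparable radii, and up to the equivalence $\propto$ it suffices to analyse the tube for the slaved coordinate $Y$ alone.

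The core of the argument is the path representation of the exponent of $M_T^{-1}$. On the tube I would replace $f(X_t,Y_t)$ by $f(\phi_1(t),\phi_2(t))$ at a cost that vanishes as $\varepsilon\to0$ (here $f\in C_b^2$ is used), and re-express $M_T^{-1}$ through $dW_t=d\tilde W_t-\frac1c f\,dt$, so that its exponent becomes $\frac1c\int_0^T f\,d\tilde W_t-\frac{1}{2c^2}\int_0^T f^2\,dt$; the second term is already deterministic on the tube and supplies the $\frac{1}{2c^2}f^2$ part of the answer. Applying the It\^o formula for jump diffusions to the $y$-antiderivative $G(x,y)=\int^{y}f(x,u)\,du$ along $(X_t,Y_t)$ converts the stochastic integral $\frac1c\int_0^T f\,d\tilde W_t$ into a deterministic boundary term $\frac1{c^2}[G(\phi_1,\phi_2)]_0^T$, a slaving integral $-\frac1{c^2}\int_0^T \partial_x G\,g\,dt$, the second-order term $\frac{c^2}{2}\int_0^T\partial_{yy}G\,dt$, and a jump integral against $N(dt,d\xi)$. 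Writing the boundary term as $\frac1{c^2}\int_0^T(\partial_x G\,\dot\phi_1+f\dot\phi_2)\,dt$ and invoking $\dot\phi_1=g(\phi_1,\phi_2)$ cancels the slaving integral and leaves exactly the cross term $\frac1{c^2}\int_0^T f\dot\phi_2\,dt$, while the second-order term gives rise to the correction $\frac12\frac{\partial f}{\partial y}$ in the Onsager--Machlup function.

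It remains to treat the L\'evy jumps and the Gaussian normalisation, which I expect to be the main obstacle. Using $\int_{|\xi|<1}\xi\tilde N(dt,d\xi)=\int_{|\xi|<1}\xi N(dt,d\xi)-\big(\int_{|\xi|<1}\xi\,\nu(d\xi)\big)dt$, the compensation of the small jumps shifts the effective continuous drift of $Y$ from $f$ to $f-\int_{|\xi|<1}\xi\,\nu(d\xi)$. Splitting $L_t$ as in \eqref{decomposition}, any large jump ($|\xi|\ge1$) drives the path out of an $\varepsilon$-tube about the continuous reference $\phi$, so conditioning on the tube excludes them up to a constant absorbed into $C(\varepsilon)$. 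The delicate point is the small jumps, whose measure may be infinite near the origin: the tube excludes jumps of size larger than order $\varepsilon$, so the jump integral appearing above is restricted to $|\xi|<\varepsilon$ and vanishes as $\varepsilon\to0$, its compensator being $O\!\big(\int_{|\xi|<\varepsilon}|\xi|\,\nu(d\xi)\big)\to0$; this is precisely where the hypothesis $\int_{|\xi|<1}\xi\,\nu(d\xi)<\infty$ and a careful asymptotic analysis as $\varepsilon\to0$ are indispensable. The surviving factor is the small-ball probability of the driftless Gaussian reference, whose drift is shifted by the compensator, contributing $C(\varepsilon)$ together with the quadratic $\frac{1}{2c^2}\int_0^T\big(\dot\phi_2+\int_{|\xi|<1}\xi\,\nu(d\xi)\big)^2\,dt$. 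Collecting the deterministic value of $M_T^{-1}$ on the tube with this small-ball estimate and completing the square (so that $f$ is replaced by the effective drift $f-\int_{|\xi|<1}\xi\,\nu(d\xi)$) yields
$$
\int_0^T\left[\frac{1}{2}\left|\frac{\dot\phi_2-f(\phi_1,\phi_2)+\int_{|\xi|<1}\xi\,\nu(d\xi)}{c}\right|^2+\frac12\frac{\partial f}{\partial y}(\phi_1,\phi_2)\right]dt
$$
up to an additive constant, and hence $\mu_Z(K(\phi,\varepsilon))\propto C(\varepsilon)\exp(-I(\phi,\dot\phi))$ as claimed.
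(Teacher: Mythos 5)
Your overall strategy---Girsanov transformation to transfer the drift into an explicit density, Gronwall/controllability to reduce the two-dimensional tube to a one-dimensional tube for $Y$, an It\^o path representation of the stochastic integral via a $y$-antiderivative of $f$, and Taylor expansion plus control of the jump sum---is the same skeleton as the paper's proof, and your Gronwall reduction and the appearance of $\tfrac12\partial_y f$ from the second-order It\^o term match the paper exactly. The genuine difference is the choice of Girsanov transformation. You remove the drift $f$ entirely, so that under $\tilde{\mathds{P}}$ the process $Y$ is the driftless noise $y_0+c\tilde W+L$; the conditioning event then becomes a tube about the \emph{shifted} path $\phi_2-y_0$, and to extract $C(\varepsilon)\exp\bigl(-\tfrac{1}{2c^2}\int_0^T(\dot\phi_2+\int_{|\xi|<1}\xi\,\nu(d\xi))^2dt\bigr)$ from $\tilde{\mathds{P}}(\|c\tilde W+L-(\phi_2-y_0)\|\le\varepsilon)$ you need a quasi-translation-invariance (Cameron--Martin-type) asymptotic for the law of $cW+L$ under Cameron--Martin shifts. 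You assert this but do not prove it; it is a nontrivial lemma (it is essentially the hard step of \cite{Chao2019} in the non-degenerate one-dimensional case, so it is citable once you have reduced to the $Y$-tube, but it must be invoked explicitly). The paper sidesteps this entirely by choosing the auxiliary process with drift $\dot\phi_2(t)$ in the $Y$-equation, i.e.\ $dY_t^*=\dot\phi_2(t)dt+cdW_t+dL_t$, so that $Y^*-\phi_2=cW+L$ exactly and the conditioning event is the \emph{centered} small ball $\{\|Y^L\|\le\varepsilon\}$, whose probability is automatically a $\phi$-independent constant $C(\varepsilon)$; the entire quadratic $\tfrac12|(\dot\phi_2-f+\int_{|\xi|<1}\xi\nu(d\xi))/c|^2$ then comes out of the density $M_T$ alone. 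This is precisely the workaround the authors highlight in their conclusion for the degenerate setting.

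Two smaller points. First, your bookkeeping of where the compensator $\Lambda=\int_{|\xi|<1}\xi\,\nu(d\xi)$ enters is vague: in the paper it appears as the explicit term $-\Lambda\cdot\tfrac1cF$ in the $dt$-part of the It\^o formula for the potential $V$, which is what produces the cross term needed to complete the square to $(\dot\phi_2-f+\Lambda)^2$; in your version you must track both this term and the shift of the small-ball reference, and you should verify the cross terms actually assemble (they do, but only if the compensator contribution to the It\^o formula is kept, not merely the shift of the "effective drift"). Second, your treatment of the residual jump sum (restricting to jumps of size $O(\varepsilon)$ on the tube and letting its compensator vanish) is plausible and arguably sharper than the paper's, which only bounds the jump sum by the total variation of $L$ and absorbs it into a constant; neither treatment is fully rigorous as written, but yours is not worse. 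With the translation lemma supplied (e.g.\ quoted from the non-degenerate case), your route closes.
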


\begin{remark}\label{Remark 3.1} Compared to degenerate Brownian noise situation, small jumps contribute to the OM action functional and the effect is similar to adding the mean of small jumps to the drift $f(x,y)$. In addition, if the integral $\int_{|\xi|<1}\xi\nu(d\xi)=0$ in the sense of Cauchy principal values, the L\'evy noise will have no effect on the OM action functional. 
\end{remark}

\begin{remark}\label{Remark 3.2} For the case of (degenerate) Brownian noise, Karhunen-Loeve expansion method works well to investigate the OM action functional by converting stochastic integrals to independent random variables. However, this method is invalid for L\'evy noise case due to the nonequivalence of irrelevance and independence. Inspired by our previous work \cite{Chao2019}, we adopt the path representation method here, because it treats the jumps as a whole by  It\^o formula and then these parts could be controlled. Significantly, we could deal with the large jumps in our models because they can be controlled by the bounded variation.

%Theorem \ref{theorem 4.1} presents the OM function for a scalar stochastic differential equation with a general L\'{e}vy process with jump measure $\nu$ satisfying integral $\int_{|\xi|<1}\xi\nu(d\xi)<\infty$. In particular, the result of Theorem \ref{theorem 4.1} is valid for $\alpha$-stable L\'evy motion with $0<\alpha<1$ as $\int_{|\xi|<1}|\xi|\nu_{\alpha, \beta}(d\xi)<\infty$. Thus, $\int_{|\xi|<1}\xi\nu_{\alpha, \beta}(d\xi)<\infty$.
\end{remark}

\begin{remark}\label{Remark 3.3} The conclusion of the Theorem \ref{theorem 3.1} can be generalized to high-dimensional case (with slight modification), as long as $\int_{|\xi|<1}\xi\nu(d\xi)<\infty$ and the primary function for the drift coefficient $f(x,y)$ exists (see, e.g., \cite{Hu2021transition} for sufficient conditions on the existence of the primary function). In fact, for a degenerate SDE on $\mathbb{R}^d\times \mathbb{R}^m$ in the form as follows:
\begin{equation}\label{Equation-1dm}
\left\{
\begin{array}{l}
dX_t=g(X_t,Y_t)dt, \;\;\; X_0=x_0\in \mathbb{R}^d,\\
dY_t=f(X_t,Y_t)dt+cdW_t+dL_t, \;\;\; Y_0=y_0\in \mathbb{R}^m, \\
\end{array}
\right.
\end{equation}
the OM functional is given, up to an additive constant, by
$$I(\phi, \dot{\phi})=\frac{1}{2}\int_0^T\left|\frac{\dot{\phi}_2(t)-f(\phi_1(t),\phi_2(t))+\int_{|\xi|<1}\xi \nu(d\xi)}{c}\right|^2+div_{\phi_2}f(\phi_1(t),\phi_2(t))dt,
$$ 
where $div_{\phi_2}$ denotes the divergence on the second component, and $\phi$ is the function such that $\phi(0)=z_0=(x_0,y_0)\in \mathbb{R}^{d+m}$, $\phi(t)-\phi(0)\in\widetilde{\mathcal{H}}$ and $\dot{\phi}_1(t)=g(\phi_1(t),\phi_2(t))$ with
$$
\widetilde{\mathcal{H}}=\big\{h:[0,T]\rightarrow \mathbb{R}^{d+m}\mid h(t) \hbox{ is absolutely continuous,}\;\dot{h}\in L^2([0,T], \mathbb{R}^{d+m}),\;h(0)=0 \big\}.
$$
\end{remark}

\subsection{The most probable transition pathway between arbitrary states}\label{sec:32}
With the help of Theorem \ref{theorem 3.1}, one can seek for a $\phi(t)$ in certain path space which minimizes the OM functional as the most probable transition pathway (MPTP). If one is interested in transition between arbitrary states, the path space could be
$$
\mathcal{A}=\{\phi\in C^2([0,T]) \mid \phi(0)=z_0, \;\phi(T)=z_T,\; \text{for}\;z_0,\; z_T\in \mathbb{R}^2\}
$$
(in fact, it can be also considered as $\cup_{T>0}\mathcal{A}$). In this way, we need to check that whether there exists (at least) one function $\phi^\ast \in \mathcal{A}$ such that $I(\phi^\ast, \dot{\phi}^\ast)=min_{\phi\in \mathcal{A}}I(\phi, \dot{\phi})$, and $\phi^\ast$ defines the MPTP connecting $z_0$ and $z_T$. 

%A path $z\in C^2([t_0, t_f], \mathbb{R}^2)$ from $z_{t_0}$ to $z_{t_f}$ is said to be minimal if $I[z]\leq I[z+\zeta]$ for every variation $\zeta\in C^2([t_0, t_f], \mathbb{R}^2)$ such that $\zeta(t_0)=\zeta(t_f)=0$.

Further based on the variational principle, the (local) minimizers of an action functional are indeed critical points and satisfy an ``Euler--Lagrange"-like equation equipped with boundary conditions. In analogy to classical mechanics, one can interpret the OM function as a Lagrangian:
$$
\mathcal{L}(\phi_1,\phi_2,\dot{\phi_2})=\frac{1}{2}\left(\frac{\dot{\phi}_2-f(\phi_1,\phi_2)+\int_{|\xi|<1}\xi \nu(d\xi)}{c}\right)^2+\frac{1}{2}\frac{\partial f}{\partial \phi_2}(\phi_1,\phi_2)
$$
with $\dot{\phi_1}=g(\phi_1,\phi_2)$. To  study the MPTP, it reduces to the following constrained variational problem:
\begin{equation}\label{CV}
\left\{
\begin{array}{l}
\delta I=0,\\
I(\phi_1,\phi_2,\dot{\phi_2})=\frac{1}{2}\int_0^T\Big(\frac{\dot{\phi}_2-f(\phi_1,\phi_2)+\int_{|\xi|<1}\xi \nu(d\xi)}{c}\Big)^2+\frac{\partial f}{\partial \phi_2}(\phi_1,\phi_2)dt,\\
\dot{\phi_1}=g(\phi_1,\phi_2),\\
\phi(0)=z_0, \\
\phi(T)=z_T.
\end{array}
\right.
\end{equation}

Note that the Lagrangian $L$ here differs from traditional Lagrangian mechanics, owing to the degenerate noise. To solve the variational problem \eqref{CV}, we introduce an Lagrange multiplier $\lambda(t)$ on the phase space to enforce the constraint. This leads to the following variational principle of Hamilton--Pontryagin \cite{Yoshimura20061,Yoshimura20062}:

\begin{proposition}\label{HP principle}
    (Hamilton--Pontryagin principle) A curve $\gamma=(\phi, \dot{\phi}, \lambda)$ joining $\phi(0)=z_0$ to $\phi(T)=z_T$ satisfies the following ordinary-differential-equation (ODE) system (referred as the Hamilton--Pontryagin equation or general Euler--Lagrange equation in the implicit form):
\begin{equation}\label{Hamilton-Pontryagin}
\left\{
\begin{array}{l}
-\frac{d}{dt}\frac{\partial L}{\partial\dot{\phi_2}}(\phi_1,\phi_2,\dot{\phi_2})+\frac{\partial L}{\partial \phi_2}(\phi_1,\phi_2,\dot{\phi_2})+\lambda\frac{\partial g}{\partial \phi_2}(\phi_1,\phi_2)=0,\\
\frac{\partial L}{\partial \phi_1}(\phi_1,\phi_2,\dot{\phi_2})+\lambda\frac{\partial g}{\partial \phi_1}(\phi_1,\phi_2)+\dot{\lambda}=0,\\
g(\phi_1,\phi_2)-\dot{\phi_1}=0,
\end{array}
\right.
\end{equation}
if $\gamma$ is a critical point of the Hamilton--Pontryagin functional 
$$
J(\phi_1,\dot{\phi_1},\phi_2,\dot{\phi_2},\lambda)=I(\phi_1,\phi_2,\dot{\phi_2})+\int_0^T\langle \lambda,g(\phi_1,\phi_2)-\dot{\phi_1}\rangle dt,
$$
that is, $\delta J=0$.
\end{proposition}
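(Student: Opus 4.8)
The plan is to treat the statement as a first-variation computation for the augmented functional $J$, regarding $\phi_1$, $\phi_2$ and the multiplier $\lambda$ as three mutually independent functions to be varied; the point of the Lagrange-multiplier device is precisely that we may drop the constraint $\dot{\phi}_1=g(\phi_1,\phi_2)$ \emph{a priori} and recover it \emph{a posteriori}. I would fix a critical curve $\gamma$ and, for each variable, introduce a one-parameter family of admissible perturbations $\phi_1+s\,\eta_1$, $\phi_2+s\,\eta_2$, $\lambda+s\,\eta_3$, where $\eta_1,\eta_2\in C^2([0,T])$ vanish at the endpoints (because $\phi(0)=z_0$ and $\phi(T)=z_T$ are prescribed in $\mathcal{A}$) while $\eta_3$ is left free. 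Imposing $\frac{d}{ds}J\big|_{s=0}=0$ then splits into three independent variational identities, and the problem reduces to reading off the pointwise consequences of each.

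First I would vary $\lambda$. Since $\lambda$ enters $J$ only linearly, through $\int_0^T \langle\lambda,\,g(\phi_1,\phi_2)-\dot{\phi}_1\rangle\,dt$, its variation gives $\int_0^T \eta_3\,(g(\phi_1,\phi_2)-\dot{\phi}_1)\,dt=0$ for all $\eta_3$, and the fundamental lemma of the calculus of variations yields the third equation $g(\phi_1,\phi_2)-\dot{\phi}_1=0$ pointwise. Next I would vary $\phi_2$: collecting the contributions $\frac{\partial L}{\partial \phi_2}\eta_2+\frac{\partial L}{\partial \dot{\phi}_2}\dot{\eta}_2+\lambda\frac{\partial g}{\partial \phi_2}\eta_2$, integrating the $\dot{\eta}_2$-term by parts, and using $\eta_2(0)=\eta_2(T)=0$ to annihilate the boundary contribution, the arbitrariness of $\eta_2$ produces the first (second-order) equation. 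Finally I would vary $\phi_1$; here the decisive feature is that $L$ carries no dependence on $\dot{\phi}_1$, so the only $\dot{\phi}_1$-dependence sits in the constraint term $-\lambda\dot{\phi}_1$. Integrating that term by parts (the boundary terms again vanish since $\eta_1(0)=\eta_1(T)=0$) converts $-\int_0^T\lambda\,\dot{\eta}_1\,dt$ into $+\int_0^T\dot{\lambda}\,\eta_1\,dt$, and the arbitrariness of $\eta_1$ gives the second equation, in which $\dot{\lambda}$ appears in place of a second derivative of $\phi_1$. This asymmetry between the two components is exactly the implicit, non-classical form of the Euler--Lagrange system announced before the statement.

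The computation is essentially routine, so the issues that arise are ones of care rather than genuine difficulty. There are three: (i) regularity bookkeeping, where I would require $\phi\in C^2$ (supplied by $\mathcal{A}$) and $\lambda\in C^1$ so that every integration by parts and the very appearance of $\dot{\lambda}$ are legitimate; (ii) the justification that the three families of perturbations may be varied independently, which is precisely what the multiplier $\lambda$ buys and which explains why the constraint is recovered from a \emph{separate} variation rather than imposed on $\eta_1,\eta_2$ in advance; and (iii) confirming that every boundary term generated by integration by parts vanishes, which reduces to the prescribed endpoints for $\phi$ together with the fact that no endpoint condition is (nor should be) imposed on $\lambda$. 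I would close by invoking the fundamental lemma three times to pass from the integral identities to the pointwise ODE system \eqref{Hamilton-Pontryagin}, completing the proof.
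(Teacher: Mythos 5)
Your proposal is correct and follows essentially the same route as the paper: a first variation of the augmented functional $J$ with independent perturbations of $\phi_1$, $\phi_2$, $\lambda$ vanishing at the endpoints, integration by parts on the $\dot{\eta}_2$- and $\lambda\dot{\eta}_1$-terms, and the fundamental lemma to pass to the pointwise system. The only cosmetic difference is that the paper performs the three variations simultaneously in one displayed computation rather than one at a time, which changes nothing of substance.
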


\begin{proof}
    The proof of this proposition is standard, based on calculating the differential of Hamilton--Pontryagin functional, and applying the formula for integration by parts as well as the endpoint conditions. In fact, we only need to calculate that
    \begin{align}
        0=&\frac{d}{d\eta}\Big|_{\eta=0}\int_{0}^{T}J(\phi_1+\eta\zeta_1,\dot{\phi_1}+\eta\dot{\zeta_1}, {\phi_2}+\eta{\zeta_2},\dot{\phi_2}+\eta\dot{\zeta_2},\lambda+\eta\zeta_3)dt\notag\\
        =&\int_0^T \bigg[
        \frac{\partial \mathcal{L}}{\partial \phi_1}(\phi_1,\phi_2,\dot{\phi_2})\zeta_1+\frac{\partial \mathcal{L}}{\partial \phi_2}(\phi_1,\phi_2,\dot{\phi_2})\zeta_2+\frac{\partial \mathcal{L}}{\partial \dot{\phi_2}}(\phi_1,\phi_2,\dot{\phi_2})\dot{\zeta_2}\notag\\
        &+\lambda \frac{\partial g}{\partial \phi_1}(\phi_1,\phi_2)\zeta_1-\lambda\dot{\zeta_1}+\lambda \frac{\partial g}{\partial \phi_2}(\phi_1,\phi_2)\zeta_2
        +(g(\phi_1,\phi_2)-\dot{\phi_1})\zeta_3
        \bigg]dt\notag\\
        =&\int_0^T\bigg\{\bigg[
        \frac{\partial L}{\partial \phi_1}(\phi_1,\phi_2,\dot{\phi_2})+\lambda\frac{\partial g}{\partial \phi_1}(\phi_1,\phi_2)+\dot{\lambda}
        \bigg]\zeta_1\notag\\
        &+\bigg[
        -\frac{d}{dt}\frac{\partial L}{\partial\dot{\phi_2}}(\phi_1,\phi_2,\dot{\phi_2})+\frac{\partial L}{\partial \phi_2}(\phi_1,\phi_2,\dot{\phi_2})+\lambda\frac{\partial g}{\partial \phi_2}(\phi_1,\phi_2)
        \bigg]\zeta_2\notag\\
        &+(g(\phi_1,\phi_2)-\dot{\phi_1})\zeta_3 \bigg\}dt,\notag
    \end{align}
    for all variations  $\zeta_1,\; \zeta_2,\; \zeta_3 \in C^2([0, T], \mathbb{R})$ with $\zeta_i(0)=\zeta_i(T)=0$, $i=1,2,3$.
\end{proof}

Eventually, we conclude that each MPTP solves a boundary value problem of the Hamilton--Pontryagin equation \eqref{Hamilton-Pontryagin}. It should be noted that such a problem does not always have a solution. Interestingly, for some special cases (e.g., kinetic Langevin systems with quadratic potentials; see Section \ref{sec:4}), analytical solution to this ODE system \eqref{Hamilton-Pontryagin} may exist, and the constrained ``minimization" could be solved analytically. Moreover, we emphasize that the MPTP we have found is not the real path for the original stochastic system (\ref{Equation-2}), and it captures practical paths of the largest probability around its neighborhood in the sense of Theorem \ref{theorem 3.1}.

\begin{remark}\label{rmk3-5}
    For some practical models, one may be more interested in the MPTP connecting configurations (instead of states on the phase space). The approach presented here is still applicable with the following modification: Firstly, fix $y(0)$ and solve the variational problem without end point constraint by Proposition \ref{HP principle}; Secondly, optimize among the solutions satisfying the end point constraint; Thirdly, optimize with respect to $y(0)$. In particularly, if $\frac{\partial f}{\partial \phi_2}$ is a constant and there is no additional requirement (we remark that in molecular dynamics, for example, one may take the Gibbs-Boltzmann distribution of kinetic energy into account and consider an alternative version of the action), the solution would usually be the Newtonian path, for which the initial velocity is big enough to overcome all energy barriers so that the square in the integral of $I$ is just equal to 0.
\end{remark}

\subsection{Proof of the Theorem \ref{theorem 3.1}}\label{proof-main}

Afer presenting our general theory in Section \ref{sec:31} $\&$ \ref{sec:32}, we now turn to proving the main theorem by the following three steps. %For convenience, in the proof below $C$ stands for an unspecified constant.

%\begin{proof}[ ]
\noindent {\bf Step 1: Applying Girsanov transformation to absorb drift term}\par
\smallskip
Consider the following auxiliary SDE for $Z_t^*=(X_t^*,Y_t^*)^T$ with respect to the probability space $(\Omega, \mathscr{F}, (\mathscr{F}_t)_{t\geq0}, \mathds{P})$:
\begin{equation}\label{Auxiliary-Eq}
\left\{
\begin{array}{l}
dX_t^*=g(X_t^*,Y_t^*)dt, \;\;\; X_0^*=x_0\in \mathbb{R},\\
dY_t^*=\dot{\phi_2}(t)dt+cdW_t+dL_t, \;\;\; Y_0^*=y_0\in \mathbb{R}. \\
\end{array}
\right.
\end{equation}
Denote by $Y^L$ the stochastic process $Y_t^L=cW_t+L_t$ for convenience. Clearly, we can solve the second equation in \eqref{Auxiliary-Eq} and obtain that $Y_t^*=\phi_2(t)+Y_t^L$. On the other hand, we put
\begin{equation}\label{Mt}
    M_t=\exp\left[\int_0^t F(X_s^*,Y_s^*,\dot{\phi}_2(s))dW_s-\frac{1}{2}\int_0^t \left|F(X_s^*,Y_s^*,\dot{\phi}_2(s))\right|^2 ds\right]
\end{equation}
with
$$
F(x,y,\dot{\phi}_2(t))=\frac{1}{c}\big(f(x,y)-\dot{\phi_2}(t)\big)%=\frac{1}{c}\big(f(X_t^*,\phi_2(t)+Y_t^L)-\dot{\phi_2}(t)\big)
$$
and all $ t\leq T$. By applying the Theorem \cite[Theorem 1.4]{Ishikawa2023stochastic} with respect to Girsanov transformation, the process $M_t$ is martingale, and
$$
\mathds{Q}(A)=\int_A M_T(\omega)d\mathds{P}(\omega)
$$
defines a probability measure on $\Omega$ such that $\mathds{Q}(A)=\int_A M_t(\omega)d\mathds{P}(\omega)$ for all $A\in \mathscr{F}_t$. Furthermore, under the new filtered probability space $(\Omega, \mathscr{F}, (\mathscr{F}_t)_{t\geq0}, \mathds{Q})$, the process $W_t^*:=W_t-\int_0^t F(X_s^*,Y_s^*,\dot{\phi}_2(s))ds$ is indeed a Brownian motion and the random measure $\tilde{N}(dt,dx)$ is still a compensated Poisson one with the jump measure $\nu$. Hence, the stochastic differential representation for $Z_t^*$ with respect to $(\Omega, \mathscr{F}, (\mathscr{F}_t)_{t\geq0}, \mathds{Q})$ is given by
\begin{equation}\label{Auxiliary-Eq-Q}
\left\{
\begin{array}{l}
dX_t^*=g(X_t^*,Y_t^*)dt, \;\;\; X_0^*=x_0\in \mathbb{R},\\
dY_t^*=f(X_t^*,Y_t^*)dt+cdW_t^*+dL_t. \\
\end{array}
\right.
\end{equation}
Compare the original SDE \eqref{Equation-2} for $(Z_t,\mathds{P})$ with the above SDE \eqref{Auxiliary-Eq-Q} for $(Z_t^*,\mathds{Q})$, and denote by $\mu_Z$ and $\mu_{Z^*}^{\mathds{Q}}$ the measures induced by these two SDEs respectively. We can find that $\mu_Z=\mu_{Z^*}^{\mathds{Q}}$, according to the uniqueness in distribution \cite [Page 410] {Applebaum2009}. 

As a result, we have the following expression for Radon--Nikodym derivative:
\begin{equation}\label{Radon-Nikodym}
   \frac{d\mu_Z}{d\mu_{Z^*}}[Z_t^*(\omega)]=\frac{d\mu_{Z^\ast}^{\mathds{Q}}}{d\mu_{Z^*}}[Z_t^*(\omega)]=\frac{d\mathds{Q}}{d\mathds{P}}(\omega)=M_T(\omega). 
\end{equation}
In this way, for any $\varepsilon>0$,
\begin{equation}\label{EP}
\mathds{P}(\|Z-\phi\|\leq\varepsilon)=\mathds{Q}(\|Z^*-\phi\|\leq\varepsilon)=\int_{\|Z^*-\phi\|\leq\varepsilon}\frac{d\mathds{Q}}{d\mathds{P}}d\mathds{P}(\omega)=\mathds{E}\big[M_T \mathds{1}_{\{\|Z^*-\phi\|\leq\varepsilon\}}\big].
\end{equation}

To estimate (\ref{EP}), we give a result about the controllability on small ball probability, i.e., there exists a positive constant $K$ such that for a.s. $\omega\in\Omega$,
\begin{equation}\label{controllability}
\|X^*-\phi_1\|\leq K \|Y^*-\phi_2\|=K\|Y^L\|.
\end{equation}
In facts, it follows from (\ref{Auxiliary-Eq}) and $\phi_1(t)=x_0+\int_0^tg(\phi_1(s),\phi_2(s))ds$ that
$$
|X_t^*-\phi_1(t)|^2\leq K_1\int_0^T\{|X_s^*-\phi_1(s)|^2+|Y_s^*-\phi_2(s)|^2\}ds.
$$
Based on Gronwall's inequality and the fact that $Y_t^*=\phi_2(t)+Y_t^L$, we have
$$|X_t^*-\phi_1(t)|^2\leq e^{K_1t}\int_0^t|Y_s^*-\phi_2(s)|^2ds\leq K_2 \sup_{t\in[0,T]}|Y_t^*-\phi_2(t)|^2=K_2 \sup_{t\in[0,T]}|Y_t^L|^2.$$
So the desired result (\ref{controllability}) holds. In this way, we infer that 
$$\|Z^*-\phi\|=O(\varepsilon) \iff \|Y^L\|=O(\varepsilon),$$
and thus rewrite (\ref{EP}) as
\begin{equation}\label{EPR}
\mathds{P}(\|Z-\phi\|\leq\varepsilon)=\mathds{E}\big[M_T \mathds{1}_{\{\|Y^L\|\leq\varepsilon\}}\big].
\end{equation}
We highlight that the probability $\mathds{P}(\|Y^L\|\leq \varepsilon)$ is not related to the drift functions, and the drift information is only contained  in $M_T$.% (in other words, it has been absorbed into $M_T$).

% \smallskip
% \noindent {\bf Step 2: Dominating the Possion integral by bounded variation}\par
\smallskip
\noindent {\bf Step 2: Representing the Radon--Nikodym derivative in terms of path integrals}\par
\smallskip

Keep in mind that now our aim is to study the limiting behaviors of (\ref{EPR}) as $\varepsilon$ tends to $0$. Before that, we need to deal with
$M_T$ given by the Radon--Nikodym derivative \eqref{Radon-Nikodym}, based on the path representation method. A key point is to transform the stochastic integral in \eqref{Mt} using the It\^o's formula \cite[Theorem 4.4.7]{Applebaum2009}.

We now set the potential function
$$
V(x,y,\dot{\phi}_2(t))=\frac{1}{c}\int^yF(x,u,\dot{\phi}_2(t))du
$$
and calculate its differential at $(X_t^*,Y_t^*, \dot{\phi_2}(t))$ by It\^o's formula:
\begin{align}\label{Ito-V}
    &dV(X_t^*,Y_t^*, \dot{\phi_2}(t))\notag\\
    =&\left(\frac{\partial V}{\partial t}+g\cdot\frac{\partial V}{\partial x}+\dot{\phi_2}(t)\cdot \frac{1}{c}F+\frac{1}{2}\frac{\partial f}{\partial y}-\int_{|\xi|<1}\xi \nu(d\xi)
    \cdot\frac{1}{c}F\right)\bigg|_{(X_t^*,Y_t^*, \dot{\phi_2}(t))}dt\notag\\
    &+ F(X_t^*,Y_t^*, \dot{\phi_2}(t))dW_t\notag\\
    &+\int_{\mathbb{R}\backslash \{0\}}\left[\int_{Y^*(t)}^{Y^*(t)+\xi}\frac{1}{c}F(X^*(t),u,\dot{\phi_2}(t))du\right] N(dt,d\xi),
    %&+\sum_{0\leq t \leq T}\left[V(X_t^\ast,Y_t^\ast,\dot{\phi_2}(t))-V(X_{t-}^\ast,Y_{t-}^\ast,\dot{\phi_2}(t-))\right]\notag\\ 
    %&+\sum_{0\leq t \leq T}\left[\int_{Y^*(t-)}^{Y^*(t)}\frac{1}{c}F(X^*(t),u,\dot{\phi_2}(t))du\right]\notag\\ 
    %:=& B(X_t^*,Y_t^*, \dot{\phi_2}(t))dt+F(X_t^*,Y_t^*, \dot{\phi_2}(t))dW_t+\mathcal{S},
\end{align}
where $\int_{|\xi|<1}\xi\nu(d\xi)<\infty$ under our assumptions. For convenience, we denote by $B$ the function before $dt$ in \eqref{Ito-V}. Integrating from $0$ to $T$ on both sides and applying Proposition 4.4.8 (or Theorem 4.4.10) in \cite{Applebaum2009}, the above equation \eqref{Ito-V} is equal to
\begin{align}\label{int-F}
    \int_0^TF(X_s^*,Y_s^*, \dot{\phi_2}(s))dW_s
    =&V(X_T^*,Y_T^*, \dot{\phi_2}(T))-V(x_0,y_0, \dot{\phi_2}(0))-\int_0^TB(X_s^*,Y_s^*, \dot{\phi_2}(s))ds\notag\\
    &-\sum_{0\leq t \leq T}\bigg[\int_{Y^*(t-)}^{Y^*(t)}\frac{1}{c}F(X^*(t),u,\dot{\phi_2}(t))du\bigg].
\end{align}
Therefore, by substituting the relation \eqref{int-F} into \eqref{Mt} (with $t=T$) and denoting by $z^\ast(t)$ and $y^L(t)$ the path functions for $Z^\ast$ and $Y^L$ respectively, we infer that the Radon--Nikodym derivative satisfies the following functional property on $D_{[0,T]}^{z_0}$:
\begin{align}\label{RND-2}
    \mathcal{M}[z^\ast(t)]=&\frac{d\mu_Z}{d\mu_{Z^*}}[z^\ast(t)]\notag\\
    =&\exp\bigg\{
    V(z^\ast(T),\dot{\phi_2}(T))-V(z_0,\dot{\phi_2}(0))-\int_0^T(\frac{1}{2}F^2+B)(z^\ast(s), \dot{\phi_2}(s))ds\notag\\
    &-\sum_{0\leq t \leq T}\bigg[\int_{y^*(t-)}^{y^*(t)}\frac{1}{c}F(x^*(t),u,\dot{\phi_2}(t))du\bigg]
    \bigg\},
\end{align}
for %\textcolor{magenta}{
any $z^\ast(t)=(x^\ast(t),y^\ast(t))\in D^{z_0}_{[0,T]}$. Note that $y^\ast(t)=\phi_2(t)+y^L(t)$, and for the $K(0,\varepsilon)=\{z\in D_{[0,T]}^{0}\mid \|z\|\leq\epsilon,\epsilon>0\}$, the induced measure $\mu_{Y^L}$ of $Y^L$ satisfying $\mu_{Y^L}(K(0,\varepsilon))=\mathds{P}(\|Y^L\|\leq \varepsilon)$. We further rewrite (\ref{EPR}) as follows
\begin{equation}\label{EPRR}
\mathds{P}(\|Z-\phi\|\leq\varepsilon)=\int_{\|Y^L\|\leq\varepsilon}M_T(\omega) d\mathds{P}(\omega)=\int_{K(0,\varepsilon)}\mathcal{M}[z^\ast(t)]d\mu_{Y^L}(y^L),
\end{equation}
in which the integrand $\mathcal{M}[z^\ast(t)]$ given in \eqref{RND-2} does not depend on stochastic integrals with respect to the Brownian motion $W_t$.

\smallskip
\noindent {\bf Step 3: Estimating the expression by Taylor expansion and bounded variation.}\par
\smallskip
We point out that the expression \eqref{RND-2} contains a Riemann integral with respect to the time variable $t$ and an infinite series about jumps of the path function $y^\ast(t)$. In this step, we would like to estimate the former one by Taylor expansion, and discuss about the latter one with the help of the bounded variation of L\'evy motion $L_t$.

On the one hand, we expand the exponent of \eqref{RND-2} into a Taylor series around $y^L(t)=0$ and split iff the terms of zero order. The remaining terms can be made arbitrarily small if we choose $\varepsilon$ small enough, since $\|y^L(t)\|\leq \varepsilon$ holds for $y^L(t)\in K(0,\varepsilon)$. As discussed in \eqref{controllability}, we consider $z^\ast(t)=(x^\ast(t),y^\ast(t))\in D_{[0,T]}^{z_0}$ with
$$
\|x^*(t)-\phi_1(t)\|\leq \varepsilon, \; \|y^*(t)-\phi_2(t)\|=\|y^L(t)\|\leq \varepsilon.
$$
Provided that the drift function $f(x,y)$ is $C_b^2$ and the potential function $V$ is at least $C^2$ (in $x$, $y$, uniformly in $t\in[0,T]$), we obtain an expansion of $V$ (in the first-order of $\varepsilon$):
\begin{align}
    V(z^\ast(t),\dot{\phi_2}(t))=&V(\phi(t), \dot{\phi_2}(t))+(x^*(t)-\phi_1(t))\frac{\partial V}{\partial x}(\phi(t), \dot{\phi_2}(t))
    \notag\\
    &+(y^*(t)-\phi_2(t))\frac{\partial V}{\partial y}(\phi(t), \dot{\phi_2}(T))+o(\varepsilon)\notag\\
    =&V(\phi(t), \dot{\phi_2}(t))+O(\varepsilon),
\end{align}
for each $t\in[0,T]$, and thus have
\begin{align}\label{estimate1}
    V&(z^\ast(T), \dot{\phi_2}(T))-V(z^\ast(0),\dot{\phi_2}(0))
    =\int_0^T\frac{\partial }{\partial t}V(\phi(t),\dot{\phi}_2(t))dt+O(\varepsilon)\notag\\
    %=&\int_0^T\left(\frac{\partial V}{\partial t}+\frac{\partial V}{\partial x}\dot{\phi_1}(t)+\frac{\partial V}{\partial y}\dot{\phi_2}(t)\right)(\phi(t),\dot{\phi}_2(t))dt+o(\varepsilon)\notag\\
    =&\int_0^T\left(\frac{\partial V}{\partial t}+\frac{\partial V}{\partial x} \cdot g+\frac{1}{c}F\cdot\dot{\phi_2}(t)\right)(\phi(t),\dot{\phi}_2(t))dt+O(\varepsilon).%\notag\\
\end{align}
Similarly, we can obtain the expansions of $F^2$ and $B$ and then calculate that
\begin{align}\label{estimate2}
    &\int_0^T(\frac{1}{2}F^2+B)(z^\ast(s), \dot{\phi_2}(s))ds\notag\\
    =&\int_0^T\left(
    \frac{1}{2}F^2+\frac{\partial V}{\partial t}+g\cdot \frac{\partial V}{\partial x}+\dot{\phi_2}(t)\cdot \frac{1}{c}F+\frac{1}{2}\frac{\partial f}{\partial y}-\int_{|\xi|<1}\xi \nu(d\xi)
    \cdot\frac{1}{c}F
    \right)(\phi(s), \dot{\phi_2}(s))ds\notag\\
    &+O(\varepsilon).%\notag\\
    %&=-\int_0^T\left[\left(\frac{1}{2}+\frac{1}{c}\dot{\phi_2}(t)-\frac{1}{c}\int_{|\xi|<1}\xi \nu(d\xi)\right)F+\frac{\partial V}{\partial t}+g\cdot \frac{\partial V}{\partial x}+\frac{1}{2}\frac{\partial f}{\partial y}\right](\phi(s), \dot{\phi_2}(s))ds.
\end{align}

On the other hand, we denote by $\Delta y^*(t)=y^*(t)-y^*(t-)$ the jump of $y^*$ at the point $t$. Note that $Y^*(t)=\phi_2(t)+Y^L(t)=\phi_2(t)+cW(t)+L(t)$ with $\phi_2(t)+cW(t)$ being continuous in $t\in[0,T]$, we claim that $Y_t^\ast$, $Y_t^L$ and $L_t$ have the same bounded variation. In fact, according to Theorem 2.3.14 (or Example 2.3.15) and Theorem 2.4.25, for a.s. $\omega\in\Omega$, the finite variation $V_{[0,T]}(\omega)$ of $L_t$ exists, and 
$$
\sum_{0\leq t \leq T}|\Delta Y^\ast(t)|=\sum_{0\leq t \leq T}|\Delta Y^L(t)|=\sum_{0\leq t \leq T}|\Delta L(t)|\leq V_{[0,T]}(\omega)<\infty.
$$
Under our assumptions, the function $F$ is $C_b^2$ so that $\|F\|$ is finite and
\begin{align}\label{estimate3}
    \bigg|\sum_{0\leq t \leq T}\bigg[\int_{y^*(t-)}^{y^*(t)}\frac{1}{c}F(x^*(t),u,\dot{\phi_2}(t))du\bigg]\bigg |\leq \frac{1}{c}\|F\|\sum_{0\leq t \leq T} |\Delta y^*(t)|<\infty,
\end{align}
for any $z^\ast(t)\in D^{z_0}_{[0,T]}$.

Consequently, by combining \eqref{RND-2}, \eqref{EPRR} with the estimations  \eqref{estimate1}, \eqref{estimate2} and \eqref{estimate3}, we conclude that 
\begin{align}
    \mathds{P}(\{\|Z-\phi\|)=&\int_{K(0,\varepsilon)}\exp\left(-\hat{I}(\phi, \dot{\phi}_2)+C(y^L)+O(\varepsilon)\right) d\mu_{Y^L}(y^L)\notag\\
    =&C_\varepsilon \mu_{Y^L}(K(0,\varepsilon)) \exp(-\hat{I}(\phi, \dot{\phi}_2))
\end{align}
with $C_\varepsilon$ being a constant with respect to $\varepsilon$, and 
$$
\hat{I}(\phi, \dot{\phi_2})=\frac{1}{2}\int_0^T\left[\left(
    F-\frac{1}{c}\int_{|\xi|<1}\xi \nu(d\xi)
    \right)^2+\frac{\partial f}{\partial y}\right](\phi(s), \dot{\phi_2}(s))ds.
$$
Clearly, $I(\phi, \dot{\phi})=\hat{I}(\phi, \dot{\phi_2})$ is the desired OM action functional by Definition \ref{Definition 2.1}. The proof of Theorem \ref{theorem 3.1} is thus complete. \hfill $\qedsymbol$

\renewcommand{\theequation}{\thesection.\arabic{equation}}
\setcounter{equation}{0}

%\section{Langevin systems: MPTPs based on the fourth-order Hamilton-Pontryagin variation  }\label{sec:4}
\section{Applications to Langevin systems}\label{sec:4}

This section considers a specific case of stochastic mechanical systems incorporating the degenerate noise, modeled as a second-order and underdamped kinetic Langevin system:
\begin{equation}\label{Langevin1}
    \ddot{X}+\gamma\dot{X}=-U'(X)+\sqrt{\mu} \gamma^{\frac{1}{2}}\dot{W}(t)+\dot{L}(t),
\end{equation}
where $U\in C^3( \mathbb{R},\mathbb{R})$ is the potential function, $\gamma\in\mathbb{R}^+$ is a damping coefficient, $\mu=2k_BT$ stands for the thermal temperature, $W(t)$ is a  standard Brownian motion on $\mathbb{R}$, 
and $L(t)$ is an asymmetric $\alpha$-stable L\'evy process with the generating triplet $(0,0,\nu_{\alpha, \beta})$.
%and $\tilde N$ is the compensated Poisson process corresponding to an asymmetric $\alpha$-stable L\'evy process $L(t)$ with the generating triplet $(0,0,\nu_{\alpha, \beta})$. 
To ensure that Theorem \ref{theorem 3.1} is adaptive, we assume that $\alpha\in(0,1)$ and $\beta\in[-1,1]$. In fact, the jump measure $\nu_{\alpha, \beta}$ is given by  $$\nu_{\alpha, \beta} (d\xi)=c_1|\xi|^{-1-\alpha}\mathds{1}_{\{0<\xi<\infty\}}d\xi+c_2|\xi|^{-1-\alpha}\mathds{1}_{\{-\infty<\xi<0\}}d\xi$$
%c_1\geq0$, $c_2\geq0$, $c_1+c_2>0$
with $\beta=\frac{c_1-c_2}{c_1+c_2}$, $c_1=k_{\alpha}\frac{1+\beta}{2}$, and $c_2=k_{\alpha}\frac{1-\beta}{2}$, where
$$
k_{\alpha}=
\left \{
  \begin{array}{ll}
    \frac{\alpha(1-\alpha)}{\Gamma(2-\alpha)\cos(\frac{\pi\alpha}{2})}, &  \hbox{if $\alpha<1$,} \\
    \frac{2}{\pi}, & \hbox{if $\alpha=1$.}
  \end{array}
\right.
$$
It is clear that $\int_{|\xi|<1}|\xi|\nu_{\alpha, \beta}(d\xi)<\infty$ if and only if $\alpha<1$. Moreover, we have $\int_{|\xi|<1}\xi\nu_{\alpha,\beta}(d\xi)=\frac{\alpha \beta}{\Gamma(2-\alpha)\cos(\frac{\pi\alpha}{2})}:=\Lambda_{\alpha,\beta}$ for $\alpha<1$.

\subsection{(Parametric) Hamilton--Pontryagin equations v.s. (4th-order) Euler--Lagrange equations}

We now introduce $Y=\dot{X}$ and rewrite \eqref{Langevin1} into the form of \eqref{Equation-2} with $g(x,y)=y$, $f(x,y)=-\gamma y-U'(x)$ and $c=\sqrt{\mu}\gamma^{\frac{1}{2}}$. We remark that $X$, $Y$ and $Z=(X,Y)$ are referred as position (or configuration), velocity (or momentum) and phase variables, respectively. According to Theorem \ref{theorem 3.1}, the OM function of this system is
$$
OM(\phi,\dot{\phi})=\frac{1}{2}\left(\frac{\dot{\phi}_2+\gamma\phi_2+U'(\phi_1)+\int_{|\xi|<1}\xi \nu_{\alpha,\beta}(d\xi)}{c}\right)^2-\frac{\gamma}{2}, \;\;\text{with}\;\; \dot{\phi_1}=\phi_2,
$$
and the OM action functional is $I(\phi, \dot{\phi})=\int_0^T \text{OM}(\phi, \dot{\phi})dt$. This leads to the following Hamilton--Pontryagin (HP) equation containing a parametric function $\lambda(t)$ by Proposition \ref{HP principle} (i.e., Hamilton--Pontryagin principle):
\begin{equation}\label{HP-Lagevin}
\left\{
\begin{array}{l}
-\frac{1}{c^2}(\ddot{\phi_2}+U''(\phi_1)\dot{\phi_1}+\gamma\dot{\phi_2})+\frac{\dot{\phi_2}+\gamma\phi_2+U'(\phi_1)+\int_{|\xi|<1}\xi \nu(d\xi)}{c^2}\gamma+\lambda=0,\\
\frac{\dot{\phi_2}+\gamma\phi_2+U'(\phi_1)+\int_{|\xi|<1}\xi \nu(d\xi)}{c^2}U''(\phi_1)+\dot{\lambda}=0,\\
\dot{\phi_1}=\phi_2,\\
\phi(0)=z_0=(x_0,y_0)^T,\\
\phi(T)=z_T=(x_T,y_T)^T.
\end{array}
\right.
\end{equation}

For this special system, we may also replace $\phi_2$ by $\dot{\phi_1}$ and focus our arguments on the position variable $\phi_1$ as well as its time derivatives. Associated with the OM function $OM(\phi,\dot{\phi})$, we introduce a Lagrangian with respect to $\phi_1$, $\dot{\phi_1}$ and $\ddot{\phi_1}$:
\begin{equation*}%\label{L-special}
\mathcal{L}(\phi_1,\dot{\phi_1},\ddot{\phi_1})=\frac{\left|\ddot{\phi_1}+\gamma \dot{\phi_1}+U'(\phi_1)+\int_{|\xi|<1}\xi \nu_{\alpha,\beta}(d\xi)\right|^2}{2\mu\gamma}-\frac{\gamma}{2}.
\end{equation*}
Accordingly, we define the action functional as $I=\int_0^T\mathcal{L}(\phi_1,\dot{\phi_1},\ddot{\phi_1})dt$. By the high-order variational principle \cite{Riahi1972lagrangians}, a minimizer $\phi_1(t)$ of $I$ solves the Euler--Lagrange (EL) equation
\begin{equation*} %\label{EL}
\frac{\delta I}{\delta \phi_1}=\frac{\partial\mathcal{L}}{\partial \phi_1}-\frac{d}{dt}\frac{\partial\mathcal{L}}{\partial\dot{\phi_1}}+\frac{d^2}{dt^2}\frac{\partial\mathcal{L}}{\partial\ddot{\phi_1}}=0,
\end{equation*}
equipped with the boundary conditions. More precisely, we are indeed dealing with a fourth-order nonlinear boundary value problem
with respect to $\phi_1$:
\begin{equation}\label{EL-concrete}
\left\{
\begin{array}{l}
\ddddot{\phi_1}+\ddot{\phi_1}(2U''(\phi_1)-\gamma^2)+\dot{\phi}_1^2U'''(\phi_1)+\left(U'(\phi_1)+\int_{|\xi|<1}\xi \nu_{\alpha,\beta}(d\xi)\right)U''(\phi_1)=0,\\
\phi_1(0)=x_0,\;\dot{\phi_1}(0)=y_0,\\
\phi_1(T)=x_T,\;\dot{\phi_1}(T)=y_T,
\end{array}
\right.
\end{equation}
which is consistent with \eqref{HP-Lagevin}; See the diagram below.
\begin{center}
\begin{tikzpicture}
  \path (0,0) node (a) {$OM(\phi,\dot{\phi})$}
        (7,0) node (b) {$L(\phi_1,\dot{\phi_1},\ddot{\phi_1})$}
        (0,-3) node[align=center] (c) {(parametric) \\ HP equation}
        (7,-3) node[align=center] (d) {(4th-order) \\EL equation.};
  \draw[->]   (a) -- node[font=\footnotesize,above] { $\phi_2=\dot{\phi_1}$}(b);
  \draw[->]   (a) -- node[font=\footnotesize,right,align=center,midway] { Hamilton--\\Pontryagin\\principle}(c);
  \draw[->]   (b) -- node[font=\footnotesize,right,align=center,midway] { high-order\\ variational\\principle}(d);
  \draw[->]   (c) -- node[font=\footnotesize,above,align=center,midway] {canceling $\phi_2\&\lambda$}(d);
\end{tikzpicture}  
\end{center}

\subsection{The MPTP based on an analytical solver}

It should be pointed out that, in both \eqref{HP-Lagevin} and \eqref{EL-concrete}, the initial and final velocities are given. But in practice, the information of velocities maybe unknown, and we thus need to seek for a global MPTP between configurations, that is, the one from $\phi_1(0)=x_0$ to $\phi_1(T)=x_T$ here. As mentioned in Remark \ref{rmk3-5}, we can fix $\phi_2(0)=\dot{\phi_1}(0)$ and consider one set of sufficient initial conditions: $\phi_1(0),\dot{\phi_1}(0),\ddot{\phi_1}(0),\dddot{\phi_1}(0)$. Clearly, the first two are known, the last two should satisfy $\phi_1(T)=x_T$, and the solution will be a function of them. Hence, we optimize the action under this constraint to get the MLTP given $\dot{\phi_1}(0)$, and then further minimize the action as a function of $\dot{\phi_1}(0)$.

In fact, for this special system, things are even simpler. Notice that the action functional reaches $I_{\text{min}}=-\frac{\gamma T}{2}$ if and only if
\begin{align}\label{second-order}
\ddot{\phi_1}+\gamma\dot{\phi_1}+U'(\phi_1)+\int_{|\xi|<1}\xi \nu_{\alpha,\beta}(d\xi)=0.
\end{align}
We thus only need to solve \eqref{second-order} with boundary conditions: $\phi_1(0)=x_0$ and $\phi_1(T)=x_T$. In addition, when the potential is quadratic, analytical solutions to the ODE systems (\ref{EL-concrete}) and (\ref{second-order}) exist, and the MPTPs could be solved analytically. 

From now on, we set $U(x)=-\frac{1}{2}x^2$. According to the variation of constants, the explicit solution for problem \eqref{EL-concrete} is 
\begin{equation}\label{example-solution1}
\phi_1(t)=C_1e^{\lambda_1t}+C_2e^{\lambda_2t}+C_3e^{\lambda_3t}+C_4e^{\lambda_4t}+\Lambda_{\alpha,\beta},
\end{equation}
where $\Lambda_{\alpha,\beta}=\frac{\alpha \beta}{\Gamma(2-\alpha)\cos(\frac{\pi\alpha}{2})}$, $\lambda_i=\mp\sqrt{\frac{2+\gamma^2\mp\gamma\sqrt{\gamma^2+4}}{2}}$ for $i=1,2,3,4$ and $\lambda_1<\lambda_2<\lambda_3<\lambda_4$, and 
$C_1=\frac{det(A_0,A_2,A_3,A_4)}{det(A_1,A_2,A_3,A_4)}$, $\cdots$, $C_4=\frac{det(A_1,A_2,A_3,A_0)}{det(A_1,A_2,A_3,A_4)}$ with $A_0=(x_0-\Lambda_{\alpha,\beta},x_1-\Lambda_{\alpha,\beta},y_0,y_T)^T$, $A_i=(1,e^{\lambda_iT},\lambda_i,\lambda_ie^{\lambda_iT})^T$. Clearly, the MPTP does depend on the choices of $\dot{\phi_1}(0)=y_0$ and $\dot{\phi_1}(T)=y_T$. That is, given different $y_0$ and $y_T$, we would obtain different MPTPs as shown in Figure 1. To get a global MPTP between the configurations, we should substitute \eqref{example-solution1} back to the action $I$ and minimize $I$ with respect to $y_0$ and $y_T$. Here, the explicit expression of such a global MPTP is
\begin{equation}\label{example-solution2}
\phi_1^\star(t)=C_1^\star e^{\lambda_1^\star t}+C_2^\star e^{\lambda_2^\star t}+\Lambda_{\alpha,\beta},
\end{equation}
which can be obtained by solving the boundary problem of \eqref{second-order} directly, and in which $\lambda_{1,2}^\star=\frac{-\gamma\mp\sqrt{\gamma^2+4}}{2}$, $C_1^\star=\frac{(x_1-\Lambda_{\alpha,\beta})-e^{\lambda_2^\star} (x_0-\Lambda_{\alpha,\beta})}{e^{\lambda_1^\star}-e^{\lambda_2^\star}}$ and $C_2^\star=\frac{e^{\lambda_1^\star}(x_0-\Lambda_{\alpha,\beta})- (x_1-\Lambda_{\alpha,\beta})}{e^{\lambda_1^\star}-e^{\lambda_2^\star}}$. A global MPTP is also plotted in Figure 1. It is clear that $\phi_1^\star(t)$ is indeed a solution to \eqref{EL-concrete} with optimal initial and final velocities being $y_0=\dot{\phi_1^\star}(0)$ and $y_T=\dot{\phi_1^\star}(T)$.  

\begin{figure}
 \centering
\includegraphics[width=0.5\textwidth]{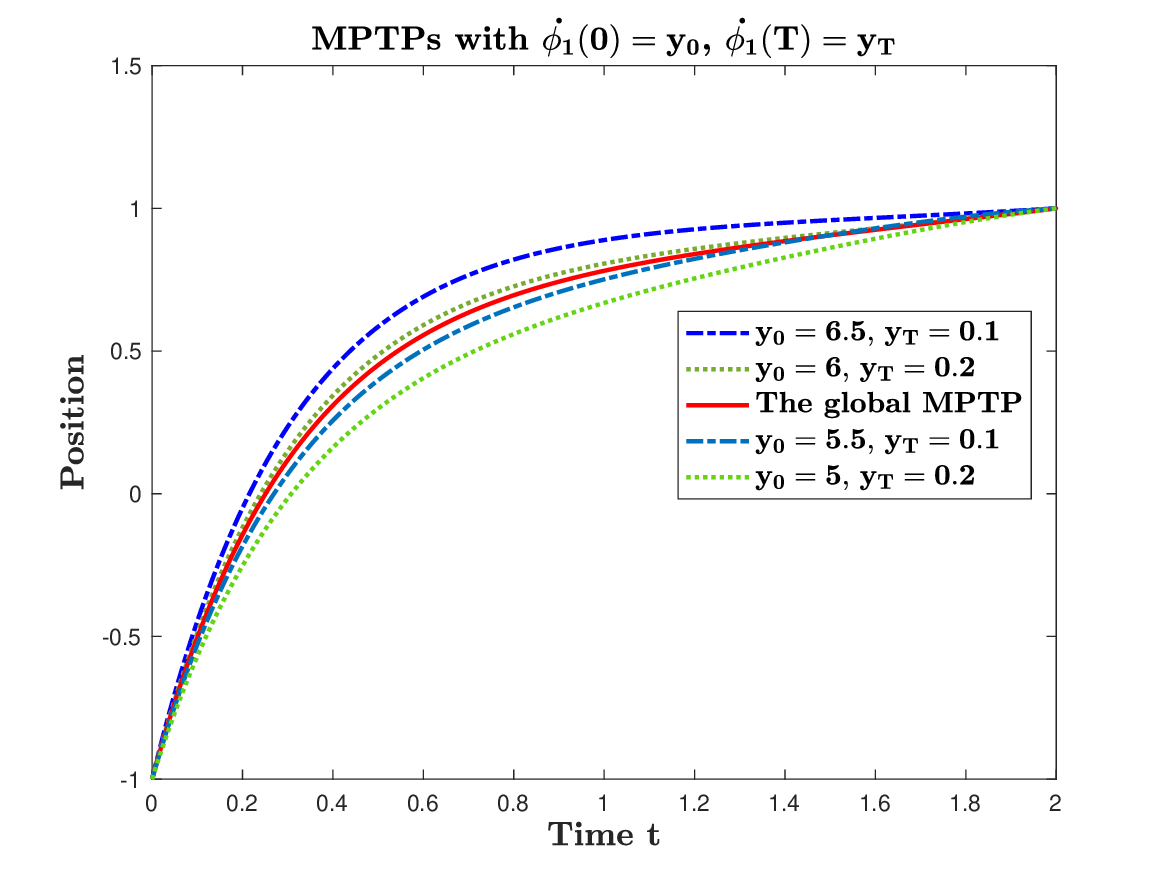}
 \label{FFig1} 
\vspace{-0.5cm}   \caption{ (Color online) MPTPs from $X(0)=-1$ to $X(2)=1$ under different initial velocity $y_0$ and final velocity $y_T$: $T=2$, $\gamma=3$, $\alpha=\beta=\frac{1}{2}$ and thus $\Lambda_{\alpha,\beta}\approx0.3989$. Red solid line: the global MPTP with $y_0=5.8078$ and $y_T=0.1904$ for which the action functional reaches $I_{\text{min}}=-\frac{\gamma T}{2}=-3$.  }
\end{figure}

\begin{figure}
 \centering
\includegraphics[width=0.95\textwidth]{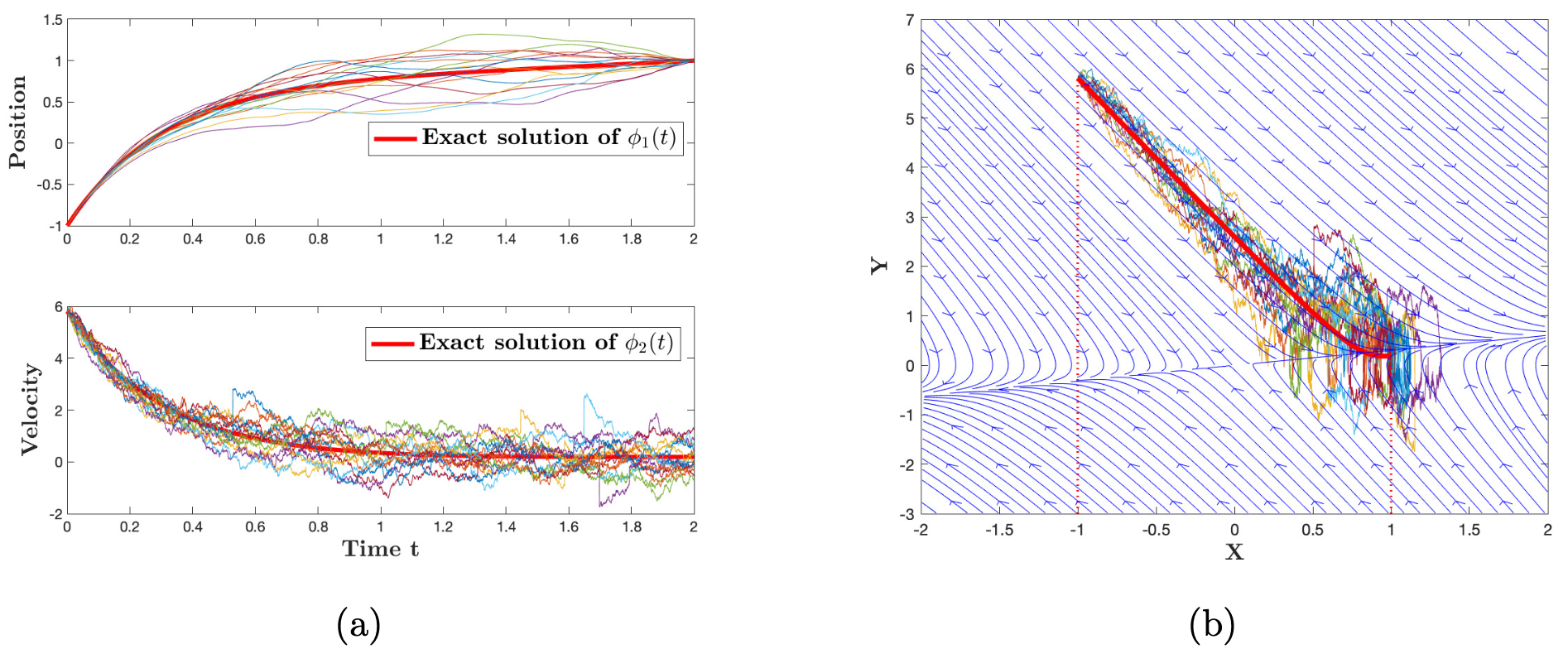}
 \label{fig2:subfig:a}
  \vspace{-0.5cm}   \caption{ (Color online)  Let $T=2$, $\gamma=3$, $\mu=0.8$, $\alpha=\beta=\frac{1}{2}$ and thus $\Lambda_{\alpha,\beta}\approx0.3989$. (a) The patterns of sample paths and the MPTP. The trajectories of $X$ are shown as the top of figure and the trajectories of $Y$ are shown as the bottom of figure.  (b) Around 15 simulations of system (\ref{Langevin1}) with initial value $X(0) = -1$ and final value  $X(2)=1$ are shown. It is evident that simulations are more likely to accumulate around the MPTP (red line).}
\end{figure}

Getting back to the settings in phase space, the MPTP between states $(x_0,y_0)$ and $(x_T,y_T)$ is just $\phi(t)=(\phi_1(t),\phi_2(t))$ with $\phi_1(t)$ given in \eqref{example-solution1} and $\phi_2(t)=\dot{\phi_1}(t)=\sum_{i=1}^4\lambda_iC_ie^{\lambda_it}$. Focusing on the globally optimal case, we investigate the time-evolution sample paths and the MPTP for the components $X$ and $Y$ respectively, as shown in Figure 2(a). We can observe that both $\phi_1(t)$ and $\phi_2(t)$ stay in the intermediate region of simulations and this is in good agreement with our theory as expected. The simulations of $X_t$ looks less ``noisy" than the ones of $Y_t$, as the noise is degenerate and, with the same time step, the random effect in the direction of $X$ is smaller than that in the direction of $Y$ when we simulate the solution to the whole SDE. In Figure 2(b), we plot the MPTP, phase paths as well as the deterministic vector field for our system in the $X$-$Y$ plane. Again, we can observe that the simulations are more likely to concentrate around the MPTP. Additionally, one interesting phenomenon is that the MPTP may choose the deterministic flow to transit, which seems somehow reasonable. The results above provide empirical evidence for supporting our theory. \par

We remark that, in most situations, the ODE system in the form of \eqref{HP-Lagevin} or \eqref{EL-concrete}, not to mention the more general one in \eqref{Hamilton-Pontryagin},  could not be sloved analytically, and nor could the constrained optimization problem. The concrete numerical method to solve this problem is not covered in this paper. We only point out that we have to be careful as discretization scheme matters and we still want to use the least action principle. Some interesting methods, e.g., stochastic variational intergrator, can be found in \cite{BouRabee2009,BouRabee20092} and the references therein.

%It is of interest to understand how a particle under random perturbations goes over a potential barrier. The analytical solver shows the optimal way is not solely by initial momentum nor solely noise
%random perturbation, but instead the joint effort, which is independent on temperature, but dependent on inertia, damping coefficient and transition time.

\section{Conclusion}\label{sec:5}
In this work, we have developed the Onsager--Machlup theory for a class of degenerate stochastic dynamical systems with both Brownian noise and L\'evy noise. Regarding the OM function as a Lagrangian, we have characterized the MPTP as the solution of the corresponding Hamilton--Pontryagin equation, under constrained conditions. The results of our work are valid for SDEs with a pure jump L\'evy process with jump measure $\nu$, as long as it has bounded variation or equivalently the integral $\int_{|\xi|<1} \xi \nu(d \xi)$ is finite (e.g., the $\alpha$-stable L\'evy motion with $0<\alpha<1$). In addition, these results can be also generalized to high-dimensional case. Numerical experiments for a second-order and underdamped kinetic Langevin system validated our theory. Particularly, for this case, the MPTP can be obtained by solving the boundary value problem of a fourth-order Euler–Lagrange equation, and the MPTP between configurations is also discussed.

Compared with our previous work \cite{Chao2019}, the common idea to derive the OM function is mainly using the Brownian motion to absorb the drift
with the help of Girsanov transformation. Due to the degeneration of noise, the so-called quasi-translation invariant measure does not exist and the proof for non-degenerate case can not be applied directly. But fortunately, the small ball probability of the $x$-component (i.e., the one whose noise term is degenerate) can be controlled by the $y$-component (i.e., the other one with a noise term). Taking advantage of this helpful technique and constructing a special auxiliary process satisfying SDE \eqref{Auxiliary-Eq}, we can overcome the difficulties raised by the degenerate noise and establish a degenerate version of the OM theory. Furthermore, different from the previous work interpreting the MPTP as a solution to the classical Euler–Lagrange equation, the MPTP here only satisfies a Hamilton–Pontryagin one containing a parametric function, which can be regarded a general Euler–Lagrange equation in the implicit
form.

\section*{Acknowledgements}
The research of YC was partially supported by NSFC grant 12101484, by the Fundamental Research Funds for the Central Universities xzy012022001, and by the NSFC grants
12271424 and 12371276. The research of PW was partially supported by CPSF grants 2022TQ0009 and 2022M720264, and by National Key R\&D Program of China 2020YFA0712800.

%\bibliographystyle{plain}
%\bibliography{yingreferences}

\bibliographystyle{alpha}
\bibliography{YC_Refs}
\end{document}